\journal{arKiv}
\newtheorem{theorem}{Theorem}[section]
\newtheorem{definition}{Definition}[section]
\newtheorem{remark}{Remark}[section]
\newcommand{\new}[1]{\textcolor{black}{#1}}
\newcommand{\B}[1]{\boldsymbol{#1}}
\newcommand{\dip}{\displaystyle}
\begin{document}
	
	\begin{frontmatter}
		
		\title{\new{Existence, uniqueness, and approximation  for solutions of a functional-integral equation in $L^p$ spaces}}
		
		\author[Unesp]{Suzete M. Afonso\corref{cor}}
		\ead{smafonso@rc.unesp.br}
		\author[UFRB]{Juarez S. Azevedo}
		\ead{juarez@ufrb.edu.br}
		\author[UFRB]{Mariana P. G. da Silva}
		\ead{mpinheiro@ufrb.edu.br}
		\author[UFRB]{ Adson M. Rocha}
		\ead{adson@ufrb.edu.br}

		\address[Unesp]{Univ Estadual Paulista, Instituto de Geoci\^{e}ncias e Ci\^{e}ncias Exatas, 13506-900, Rio Claro SP, Brasil}
		\address[UFRB]{Universidade Federal do Rec\^oncavo da Bahia - UFRB, CETEC, Centro, 44380-000, Cruz das Almas-BA, Brazil.}

		\cortext[cor]{Corresponding author.}
		
		\begin{abstract} 
			In this work we consider  the general functional-integral equation:
			\begin{equation*}
			y(t) =  f\left(t, \int_{a}^{b} k(t,s)g(s,y(s))ds\right), \qquad t\in [a,b],\end{equation*}
			and give conditions that guarantee existence and uniqueness of solution  in $L^p([a,b])$, with \new{$1<p<\infty$}. 
			\new{
				We use 
				Banach Fixed Point Theorem} and
				  employ \new{the} successive approximation method and Chebyshev quadrature for approximating the values of integrals.  Finally, to illustrate the results of this work, we  provide some numerical examples. 
		\end{abstract} 
		
		\begin{keyword}
			Functional-integral equations \sep $L^p$ spaces \sep Existence \sep Uniqueness \sep Successive approximation.
			\MSC 45G99 \sep 45L05 \sep 65R20 
		\end{keyword}

	\end{frontmatter}

	\section{Introduction}

	Nonlinear integral equations have been extensively studied in the literature, see for example \new{integral  equations} of Urysohn type \cite{Ibr:09, Kar:05}, Hammerstein type \cite{FigG:73}, and Volterra type \cite{kwapisz1991bielecki}; the works cited had  as a focal point conditions of existence of solution for such equations. In this sense, the theme has induced some authors to improve and extend these results to existence of solutions involving functional integral equations in the space $L^1([0,1])$ \cite{BanK:89, Ema:91, Ema:92}. For this reason, these authors have considered the equation:
	\begin{equation} \label{emanu1}
	y(t) = f\left(t, r \int_{0}^{1} k(t,s)g(s,y(s))ds\right), \qquad t\in [0,1], \, r>0,
	\end{equation} and proved the existence of  solutions of that equation in $L^1([0,1])$. 
	%
	In this way, they have concluded that equation \eqref{emanu1} has a solution in this space. 
		An extension of these results was given in $L^p([0,1])$, $p\geq 1$, by Karoui and Adel in \cite{Kar:10}, considering  nonlinear integral equations of the Hammerstein and Volterra type. Moreover, in \cite{nadir}, the authors were able to  guarantee the existence and  uniqueness of \new{the} solution of  Hammerstein integral equation in the $L^p([0,1])$ space. However, the results \new{were} limited to this  specific type of equation. In order to fill this gap,  we  consider the functional-integral equation defined by: 
	\begin{equation} \label{emanu}
\new{	y(t) = f\left(t,  \int_{a}^{b} k(t,s)g(s,y(s))ds\right), \qquad t\in [a,b], \, r>0,}
	\end{equation} 
	\new{with $a,b\in \mathbb{R}$,}
	and  prove that, under certain  hypotheses, it admits a unique solution in $L^p ([a, b])$, \new{$1<p<\infty$}.  Here, we  \new{delete} the term $r$ from our calculations \new{and consider an arbitrary real interval $[a,b]$ }. 
	
	

	As starting point,  we  show that, under certain conditions, \new{the operator defined by the right hand side of \eqref{emanu}}  maps $L^{p}([a,b])$ into itself. It ensures that any solution of \eqref{emanu} lies in $L^{p}([a,b])$. And, under additional hypotheses, we prove that eq. \eqref{emanu} has a unique solution in $L^{p}([a,b])$, which can be obtained as the limit of successive approximations.

	The remainder of the paper is organized as follows. In Section \ref{sectionresult} we present results on existence and uniqueness of  solutions for functional-integral equation, considering the successive approximation method. \new{In
		Section \ref{section_error} we exhibit an estimative of the error generated by the successive approximation method}. Numerical examples are provided in Section \ref{numericalexamples} and we conclude the paper in  Section \ref{conclusion}.
	
	\section{Main Results}\label{sectionresult}
	
	In what follows, we assume that the function $f: [a,b]\times \mathbb{R} \to \mathbb{R}$ in \eqref{emanu} satisfies the Caratheodory conditions, that is,
	\begin{enumerate}
		\item[$i)$]   $f(t,x)$ is continuous in $x$  for each fixed $t$;
		\item[$ii)$] $f(t,x)$ is measurable in $t$  for each fixed $x$;
		\item[$iii)$] there is a non-negative Lebesgue-integrable function  $m :[a,b]\to \mathbb{R}$ such that $|f(t, x)| \leq m (t)$, for all $(t,x)\in [a,b]\times \mathbb{R}$.
	\end{enumerate}

	\begin{theorem} \label{th1}
		Assume that the following conditions are satisfied:
		\begin{enumerate}
			\item[$(A1)$]\new{There are a non-negative function $h_1\in L^{p}([a,b])$ and a non-negative constant $b_1$ such that
			\[
			|f(t,x)| \leq h_1(t) + b_1|x|^{q/p} \qquad \textrm{for a.e.}\,\, t\,\,\textrm{in}\,\, [a,b], \, x\in \mathbb{R},~ \mbox{and}~ \frac{1}{p}+\frac{1}{q}=1.
			\]}
			\item[$(A2)$] \new{The kernel \new{$k(t,\cdot )$} is measurable, belongs to the space $L^{q}([a,b])$ {for all $t\in [a,b]$} and}
			\begin{equation}\label{h1}
		\new{	\left(\int_{a}^{b} |k(t,s)|^q ds \right)^{\frac{1}{q}} \leq M_1(t), \qquad \textrm{for any}\,\,\, t\in [a,b],}
			\end{equation}
			\new{where $M_1$ is a non-negative function in $L^{p}([a,b])$.}
			\item[$(A3)$] The function \new{$g(s,z)$} is a map from $[a,b]\times \mathbb{R}$ into $[a,b]$
			satisfying Caratheodory conditions and such that \[\textcolor{black}{|g(s,z)|\leq a_{0}(s)+b_{0}|z|}, \]
		\new{where $a_0$ is a non-negative function in $L^{p}([a,b])$ and $b_0$ is a non-negative constant.}
		\end{enumerate}
		Under conditions $(A_1)$, $(A_2)$, and $(A_3)$, the operator 
		\begin{equation}\label{operatorA}
		(Ay)(t) =  f\left(t, \int_{a}^{b} k(t,s)g(s,y(s))ds\right), \qquad t\in [a,b],
		\end{equation}
		is a map from $ L^{p}([a,b])$  into  $L^{p}([a,b])$.
	\end{theorem}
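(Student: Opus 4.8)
The plan is to fix an arbitrary $y\in L^p([a,b])$ and verify the three ingredients hidden in the phrase ``maps into $L^p$'': that the inner integral is finite for almost every $t$, that $Ay$ is measurable, and that $\|Ay\|_{L^p}<\infty$. I would start from the innermost composition. Since $g$ satisfies the Carath\'eodory conditions and $y$ is measurable, the map $s\mapsto g(s,y(s))$ is measurable, and the growth bound in $(A3)$ together with Minkowski's inequality gives
\[
\|g(\cdot,y(\cdot))\|_{L^p}\le \|a_0\|_{L^p}+b_0\|y\|_{L^p}=:C_y<\infty ,
\]
so that $g(\cdot,y(\cdot))\in L^p([a,b])$. (The hypothesis that $g$ takes values in $[a,b]$ already forces boundedness, so this step is robust, but the growth bound is what produces the explicit constant $C_y$ I will need.)

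The key step is to control the inner integral $z(t):=\int_a^b k(t,s)g(s,y(s))\,ds$. Because $\frac1p+\frac1q=1$, I would apply H\"older's inequality with the conjugate pair $(q,p)$ and then invoke $(A2)$:
\[
|z(t)|\le \Big(\int_a^b|k(t,s)|^q\,ds\Big)^{1/q}\,\|g(\cdot,y(\cdot))\|_{L^p}\le M_1(t)\,C_y
\]
for almost every $t$. This single inequality does double duty: it shows $z(t)$ is finite a.e.\ (so $z$ is well defined) and it produces a pointwise majorant for $|z(t)|$. Joint measurability of $k$ then makes the parametric integral $z$ measurable, and the Carath\'eodory conditions on $f$ make $Ay=f(\cdot,z(\cdot))$ measurable.

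Finally I would pass to the $L^p$ norm. Hypothesis $(A1)$ gives $|(Ay)(t)|\le h_1(t)+b_1|z(t)|^{q/p}\le h_1(t)+b_1 C_y^{q/p}M_1(t)^{q/p}$, so by the triangle inequality in $L^p$,
\[
\|Ay\|_{L^p}\le \|h_1\|_{L^p}+b_1 C_y^{q/p}\Big(\int_a^b M_1(t)^{q}\,dt\Big)^{1/p}.
\]
The first term is finite because $h_1\in L^p$. The point to watch, and what I expect to be the main obstacle, is precisely the exponent bookkeeping in the second term: raising the bound $|z(t)|^{q/p}$ to the power $p$ produces $M_1(t)^{q}$, so finiteness hinges on $M_1^{q/p}\in L^p$. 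This is exactly where the conjugacy $\frac1p+\frac1q=1$ must be tracked consistently, and it is the estimate I would check most carefully (using $(A2)$ and the embeddings of $L^r$-spaces on the bounded interval $[a,b]$); by comparison the measurability statements and the finiteness of $C_y$ are routine.
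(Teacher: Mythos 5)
Your proposal follows essentially the same route as the paper's proof: the first step (bounding $\|g(\cdot,y(\cdot))\|_p \le \|a_0\|_p + b_0\|y\|_p$ via $(A3)$ and Minkowski) is identical, as is the second (H\"older plus $(A2)$ to get $|z(t)| \le C_y M_1(t)$ for $z(t)=\int_a^b k(t,s)g(s,y(s))\,ds$). Where the paper then raises everything to the power $p$ and uses $(u+v)^p\le 2^p(u^p+v^p)$, you keep a pointwise bound on $|(Ay)(t)|$ and apply Minkowski once at the end; that difference is cosmetic. Your attention to measurability, which the paper omits entirely, is a welcome addition, though note that $(A2)$ only asserts measurability of $k(t,\cdot)$ for each fixed $t$, so the joint measurability you invoke is a tacit extra assumption.

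The step you single out as ``the point to watch'' is, however, a genuine gap --- and it is exactly the point at which the paper's own proof is invalid. Correct bookkeeping, as in your proposal, leads to $\|Ay\|_p \le \|h_1\|_p + b_1 C_y^{q/p}\left(\int_a^b [M_1(t)]^q\,dt\right)^{1/p}$, whose finiteness requires $M_1\in L^q([a,b])$, whereas $(A2)$ only supplies $M_1\in L^p([a,b])$. The paper masks this: having correctly derived $|Ay(t)|^p \le 2^p\left([h_1(t)]^p + b_1^p|z(t)|^q\right)$ (exponent $q$), it silently restates this bound with exponent $p$, namely $|Ay(t)|^p \le 2^p[h_1(t)]^p + 2^pb_1^p|z(t)|^p$, and only then concludes $\int_a^b|Ay(t)|^p\,dt \le 2^p\|h_1\|_p^p + 2^pb_1^p\|N_1\|_p^p<\infty$; the switch from $q$ to $p$ is never justified. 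Your proposed repair via the embedding of $L^r$-spaces of the bounded interval works precisely when $q\le p$, i.e.\ $p\ge 2$. For $1<p<2$ it fails, and so does the theorem as stated: on $[a,b]=[0,1]$ take $g\equiv 1$, $k(t,s)=\kappa(t)$ with $\kappa\ge 0$, $\kappa\in L^p\setminus L^q$, and $f(t,x)=|x|^{q/p}$; then $(A1)$--$(A3)$ hold with $M_1=\kappa$ and $h_1=0$, yet $(Ay)(t)=[\kappa(t)]^{q/p}$ gives $\int_0^1 |Ay(t)|^p\,dt = \int_0^1 [\kappa(t)]^q\,dt = \infty$. (The paper's standing Carath\'eodory condition $iii)$, which dominates $f$ by an integrable $m(t)$ uniformly in $x$, cannot rescue this: it is incompatible with any genuine growth of $f$ in $x$, and $m\in L^1$ would not give $Ay\in L^p$ either.) So your argument is complete for $p\ge 2$; for $1<p<2$ the missing ingredient is not in your proof but in the hypotheses, which should require $M_1\in L^q([a,b])$, exactly as your final estimate makes plain.
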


	\begin{proof}
		
		\new{Firstly}, note that 	\textcolor{black}{$\|g(\cdot,y(\cdot))\|_p$$< \infty$ whenever $y\in  L^{p}([a,b])$}. Indeed, by Condition $(A3)$, we have
		\[
	\new{	|g(s,y(s))|^p \leq \left(a_{0}(t)+b_{0}|y(s)|\right)^p, \qquad \textrm{for all}\,\, s\in [a,b], }
		\]
		and, therefore,
		\[
		\textcolor{black}{\|g(\cdot,y(\cdot))\|_p}= \left(\int_{a}^{b} |g(s,y(s))|^p ds  \right)^{\frac{1}{p}} \leq \left(\int_{a}^{b}\left(a_{0}(s)+b_{0}|y(s)|\right)^p ds \right)^{\frac{1}{p}}.
		\]
		Using  Minkowski's inequality, we get
		\[
		\textcolor{black}{\|g(\cdot,y(\cdot))\|_p} \leq  \left(\int_{a}^{b} |a_0(s)|^p ds\right)^{\frac{1}{p}} + \left(\int_{a}^{b} b_0^p|y(s)|^p ds\right)^{\frac{1}{p}},
		\]
		whence it follows that
		\begin{equation}\label{glq}
		\textcolor{black}{\|g(\cdot,y(\cdot))\|_p} \leq  \|a_0\|_p + b_0 \|y\|_p <\infty.
		\end{equation}

		Now let us show that if $y\in L^{p}([a,b])$ then $Ay\in L^{p}([a,b])$. In fact, for $y\in L^{p}([a,b])$ and
		$t\in [a,b]$, \new{it follows from Condition $(A_1)$ that}
		\begin{eqnarray*}
			|Ay(t)|^p & = & \left|f\left(t, \int_{a}^{b} k(t,s)g(s,y(s))ds\right)\right|^p \\
			& \leq & \left[ h_1(t) + b_1\left|\int_{a}^{b} k(t,s)g(s,y(s))ds \right|^{q/p}\right]^p \\
			& \leq & \left( 2 \max \left\{h_1(t), b_1\left|\int_{a}^{b} k(t,s)g(s,y(s))ds \right|^{q/p}\right\}\right)^p\\
			& = & 2^p  \max \left\{[h_1(t)]^p, b_1^p\left| \int_{a}^{b} k(t,s)g(s,y(s))ds \right|^q\right\}\\
			&\leq & 2^p \left([h_1(t)]^p + b_{1}^{p}\left| \int_{a}^{b} | k(t,s)g(s,y(s))| ds \right|^q\right).\\
		\end{eqnarray*}

		By H\"older's inequality, \new{condition $(A_2)$, and eq. \eqref{glq}}, we have

		\vspace{0,3 cm}
		
		$\displaystyle\begin{array}{lll}
		\displaystyle\left|\int_{a}^{b}  k(t,s)g(s,y(s))ds\right|&\leq& \displaystyle\int_{a}^{b} | k(t,s)g(s,y(s))| ds\\
		&\leq&\left(\int_{a}^{b}|k(t,s)|^{q}ds\right)^{1/q}\left(\int_{a}^{b}|g(s,y(s))|^{p}ds\right)^{\frac{1}{p}}\\
	&\leq &	\new{M_1(t) \left(\int_{a}^{b}|g(s,y(s))|^{p}ds\right)^{\frac{1}{p}}.}
		\end{array}$
		
		\vspace{0,3 cm}
		
		\new{For the sake of simplicity}, denote $N_1(t) = M_1(t) \left(\int_{a}^{b}|g(s,y(s))|^{p}ds\right)^{\frac{1}{p}}$.  Thus,
		\[
		\displaystyle\left|\int_{a}^{b}k(t,s)g(s,y(s))ds\right|^{p} \leq  \new{[N_1(t)]^p}\]
		and
		\begin{eqnarray*}
			|Ay(t)|^{p}&\leq & 2^{p} [h_{1}(t)]^{p}+2^{p}b_{1}^{p}\displaystyle\left|\int_{a}^{b}k(t,s)g(s,y(s))ds\right|^{p}\\
			&\leq&\displaystyle 2^{p}[h_{1}(t)]^{p}+2^{p}b_{1}^{p} \new{[N_1(t)]^p}.
		\end{eqnarray*}

		Finally, \new{as $N_1\in L^p([a,b])$, we obtain}
		\begin{equation*}
		\int_{a}^{b}|A(y(t))|^{p}dt \leq 2^p \|h_{1}\|_p^p + 2^{p}b_{1}^{p} \new{\|N_{1}\|_p^p} <\infty,
		\end{equation*}
		which completes the proof.
	\end{proof}
	
	\vspace{0,3 cm}
	\new{Theorem \ref{th1} states that, under certain conditions,  $Ay \in L^p[a,b]$ if $y \in L^p[a,b]$. In this way, we look for solutions  of integral equation \eqref{emanu}  in $L^{p}([a,b])$.	Now, we would like to know which conditions  are required for $f$, $k$ and $g$, in order to guarantee existence of solution this integral equation.}
	
	
	\begin{theorem} \label{exresult}Suppose that conditions {$(A1)$, $(A2)$, and $(A3)$} are satisfied. Furthermore, assume that:
		\begin{enumerate}
			\item[$(H1)$] the function $f: [a,b]\times \mathbb{R} \to \mathbb{R}$ satisfies Lipschitz  condition in the second variable, that is, there is $M>0$ such that
			\[
			|f(t,x_1) - f(t,x_2)| \leq M |x_1 - x_2|, \qquad \textrm{for any}\,\,\, t\in [a,b]\,\,\, \textrm{and}\,\,\,  x_1,x_2\in \mathbb{R}.
			\]
			
			\item [$(H2)$] \new{ the function $g: [a,b]\times \mathbb{R} \to \mathbb{R}$ satisfies Lipschitz  condition in the second variable, that is, there is $L>0$ such that}
			\[
				\textcolor{black}{|g(s,z_1) - g(s,z_2)| \leq L |z_{1} - z_{2}|, \qquad \textrm{for any}\,\,\, s\in [a,b]\,\,\, \textrm{and}\,\,\,  z_1, z_2\in \mathbb{R}.}
			\]
		\end{enumerate}
		Under such hypotheses,  the successive approximation
		\begin{equation}\label{recurrence_relation}
		\left\{ 
		\begin{aligned}
		&\new{y_0(t) = 0}, \\ 
		&y_{n+1}(t) =  f\left(t, \int_{a}^{b} k(t,s)g(s,y_n(s))ds\right), \quad n = 0, 1, 2, 3, \dots,  
		\end{aligned}
		\right. 
		\end{equation}		
		converges almost everywhere to the \new{exact solution} of eq. \eqref{emanu} provided
		\begin{equation}\label{Np_condicao}
		 \int_{a}^{b} \new{C}^p[M_1(s)]^p ds := N^p  < 1, \qquad \new{\text{where}\,\,\,\, C= ML.}
		\end{equation}	
	\end{theorem}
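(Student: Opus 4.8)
The plan is to recognise the successive approximation \eqref{recurrence_relation} as the Picard iteration $y_{n+1}=Ay_n$ for the operator $A$ in \eqref{operatorA} with initial point $y_0=0$, and then to invoke the Banach Fixed Point Theorem. Theorem \ref{th1} already guarantees, under $(A1)$--$(A3)$, that $A$ maps $L^p([a,b])$ into itself, so the whole matter reduces to showing that $A$ is a contraction on this complete space whenever \eqref{Np_condicao} holds.

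To obtain the contraction estimate I would fix $y_1,y_2\in L^p([a,b])$ and $t\in[a,b]$, apply $(H1)$ to extract the outer Lipschitz constant $M$ of $f$, and then $(H2)$ inside the integral to bound $|g(s,y_1(s))-g(s,y_2(s))|\leq L|y_1(s)-y_2(s)|$; writing $C=ML$ this yields
\[
|Ay_1(t)-Ay_2(t)|\leq C\int_a^b |k(t,s)|\,|y_1(s)-y_2(s)|\,ds.
\]
Next I would apply H\"older's inequality with conjugate exponents $q$ and $p$ together with $(A2)$ to get $|Ay_1(t)-Ay_2(t)|\leq C\,M_1(t)\,\|y_1-y_2\|_p$. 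Raising to the $p$-th power and integrating over $[a,b]$ gives
\[
\|Ay_1-Ay_2\|_p^p\leq \left(\int_a^b C^p[M_1(t)]^p\,dt\right)\|y_1-y_2\|_p^p = N^p\,\|y_1-y_2\|_p^p,
\]
so that $\|Ay_1-Ay_2\|_p\leq N\|y_1-y_2\|_p$. Since \eqref{Np_condicao} forces $N<1$, the operator $A$ is a contraction, and the Banach Fixed Point Theorem furnishes a unique fixed point $y^\ast\in L^p([a,b])$ solving \eqref{emanu}, with $\|y_n-y^\ast\|_p\leq N^n\|y_0-y^\ast\|_p\to 0$.

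The remaining, and genuinely delicate, step is to strengthen this $L^p$ convergence to convergence almost everywhere, which is what the statement asserts. Here I would exploit the geometric decay of the error: because $N<1$, the bound $\|y_n-y^\ast\|_p\leq N^n\|y_0-y^\ast\|_p$ gives $\sum_{n=0}^\infty\|y_n-y^\ast\|_p<\infty$. Setting $G=\sum_{n=0}^\infty |y_n-y^\ast|$ and using Minkowski's inequality together with monotone convergence, $\|G\|_p\leq\sum_{n=0}^\infty\|y_n-y^\ast\|_p<\infty$, so $G\in L^p([a,b])$ and hence $G(t)<\infty$ for a.e.\ $t$. Consequently the series $\sum_{n=0}^\infty|y_n(t)-y^\ast(t)|$ converges for a.e.\ $t$, which forces $y_n(t)\to y^\ast(t)$ almost everywhere. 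This last passage is the main obstacle, since $L^p$ convergence by itself only yields a.e.\ convergence along a subsequence; the summability of the errors, coming from the contraction rate $N<1$, is precisely what promotes it to a.e.\ convergence of the full sequence.
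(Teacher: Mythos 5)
Your proof is correct, and its architecture differs from the paper's in a meaningful way, even though both rest on the identical core estimate: $(H1)$, $(H2)$, H\"older's inequality and $(A2)$ give $|Ay_1(t)-Ay_2(t)|\leq C\,M_1(t)\,\|y_1-y_2\|_p$. You integrate this bound immediately, obtaining the norm contraction $\|Ay_1-Ay_2\|_p\leq N\|y_1-y_2\|_p$ on all of $L^p([a,b])$, invoke the Banach Fixed Point Theorem to get the unique fixed point $y^\ast$ together with $\|y_n-y^\ast\|_p\leq N^n\|y_0-y^\ast\|_p$, and then promote norm convergence to a.e.\ convergence through summability of the errors (Minkowski plus monotone convergence applied to $G=\sum_n|y_n-y^\ast|$) --- a step the statement genuinely needs and which you rightly flag, since $L^p$ convergence alone yields only a.e.\ convergence along a subsequence. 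The paper instead keeps the same estimate pointwise along the iterates: it iterates $|y_{n+1}(t)-y_n(t)|^p\leq C^p[M_1(t)]^p\,\|y_n-y_{n-1}\|_p^p$ to reach $|y_{n+1}(t)-y_n(t)|\leq C\,M_1(t)\,\mathcal{K}\,N^{n-1}$ with $\mathcal{K}=\|y_1\|_p$, so that the telescoping series $\sum_n\bigl(y_{n+1}(t)-y_n(t)\bigr)$ acquires the majorant $C\,M_1(t)\,\mathcal{K}\,(1-N)^{-1}$, finite for a.e.\ $t$, and then appeals (rather loosely) to the Banach Fixed Point Theorem to identify the limit as the solution. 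What your route buys: the contraction of $A$ in the plain $\|\cdot\|_p$ norm --- which the paper only establishes later, in Theorem \ref{th uniqueness}, and there with a Bielecki weight $\|\cdot\|_{p,\omega}$ precisely so as to avoid requiring $N<1$ --- delivers existence, uniqueness, the convergence rate and the identification of the limit in one stroke, and your measure-theoretic upgrade to a.e.\ convergence is more careful than the paper's closing argument. What the paper's route buys: the pointwise geometric majorant gives a.e.\ (indeed absolutely convergent) pointwise convergence of the series directly, with no separate upgrade step.
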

	
	\begin{proof}
		For this method, we put $y_0(t)$ as \new{the} identically null function and successively
		\begin{equation}\label{succapprox} 
		y_{n+1}(t) = f\left(t, \int_{a}^{b} k(t,s)g(s,y_n(s))ds\right), \quad { n = 0, 1, 2, 3, \dots}.
		\end{equation}

		\new{Since $y_0(t)\equiv 0$, it is easy to verify that $\|y_1\|_p <\infty$
		(see  Theorem \ref{th1}).}
		
		
		Using H\"older's inequality, conditions \new{$(A1)$, $(A2)$, $(A3)$, $(H1)$, and $(H2)$}, \new{we obtain, for $n\geq 1$ and $t\in [a,b]$,} 
		\begin{equation*}
		\begin{split}
		|y_{n+1}(t) - y_{n}(t)| &\leq M \int_{a}^{b} | k(t,s)| |g(s,y_n(s)) - g(s,y_{n-1}(s))| ds  \\
		& \leq  \new{C} \int_{a}^{b} | k(t,s)||y_n(s) - y_{n-1}(s))| ds \\
		& \leq  \new{C} \left(\int_{a}^{b} |k(t,s)|^q ds \right)^{\frac{1}{q}} \cdot\left(\int_{a}^{b} |y_n(s) - y_{n-1}(s))|^p ds \right)^{\frac{1}{p}}.
		\end{split}
		\end{equation*}
	\new{	Thus, for  $t\in [a,b]$, }
		\begin{equation}\label{apsuc}
		|y_{n+1}(t) - y_{n}(t)|^p \leq   \new{C}^p[M_1(t)]^p  \left(\int_{a}^{b} |y_n(s) - y_{n-1}(s))|^p ds \right).
		\end{equation}

		{Let $\mathcal{K} = \|y_1\|_p$.} \new{Inequality \eqref{apsuc}  implies}
		\begin{equation*}
		|y_{2}(t) - y_{1}(t)|^p \leq  \new{C}^p[M_1(t)]^p  \int_{a}^{b} |y_1(s)|^p ds = \new{C}^p[M_1(t)]^p   \mathcal{K}^p,
		\end{equation*}
		\begin{equation*}
		\begin{split}
		|y_{3}(t) - y_{2}(t)|^p & \leq  \new{C}^p[M_1(t)]^p   \int_{a}^{b} \new{C}^p[M_1(s)]^p   \mathcal{K}^p ds\\
		& = \new{C}^p[M_1(t)]^p \mathcal{K}^pN^{p},
		\end{split}
		\end{equation*}
		and successively
		\begin{equation*}
		|y_{n+1}(t) - y_{n}(t)|^p \leq   \new{C}^p[M_1(t)]^p \mathcal{K}^p N^{(n-1)p},
		\end{equation*}
		which is equivalent to
		\begin{equation} \label{expapsuc}
		|y_{n+1}(t) - y_{n}(t)| \leq   \new{C} M_1(t)  \mathcal{K} N^{n-1}, \quad \textrm{for}\,\,\, n\geq 1, \,\, t\in [a,b].
		\end{equation}
		Expression \eqref{expapsuc} shows  
        \new{that the  sequence $(y_n(t))$ is a  Cauchy sequence.  Using this contractivity, we can verity that the series:
				  \begin{eqnarray*}	
				  \dip\sum_{n=0}^{\infty}(y_{n+1}(t)-y_n(t)), \quad\ t\in [a,b],
				  \end{eqnarray*}
				 }		
		 has the majorant
		\[
		\new{C}  M_1(t)  \mathcal{K} (1 + N + N^2 + \cdots + N^{j-1} + N^j + \cdots), \quad\ t\in [a,b].
		\]
	\new{	Since this series converges on $L^p$-norm, 
		the convergence of the 
		sequence $(y_n(t))$
		to the exact
		solution of \eqref{emanu} is guaranteed by Banach Fixed Point Theorem \cite{atkinson2001,zeidlernonlinear}}. 
	\end{proof}
	
	Following the ideas of \cite{biel1}, \cite{biel2} and \cite{kwapisz1991bielecki}, we  prove that eq. \eqref{emanu} has a unique solution in $L^{p}([a,b])$. We   assume that conditions { $(A1)$, $(A2)$, $(A3)$, $(H1)$, and $(H2)$} {from} Theorems \ref{th1} and \ref{exresult} are satisfied. 
	
	Let $\omega:[a,b]\rightarrow \mathbb{R}^+$ be a continuous function such that $\omega(x)>0$, for all $x\in[a,b]$. Put
	\begin{equation}\label{measure}
	\| u\|_{p,\omega}=\left(\sup\left\{\omega^{-1}(x)\int_{a}^{x}|u(s)|^{p}ds; ~x\in[a,b]\right\}\right)^{1/p}.
	\end{equation}
	
	Note that for $\omega \equiv 1$ we have the classical norm in $L^{p}([a,b])$. Furthermore, it is easy to verify that eq. \eqref{measure} defines a norm for any positive continuous function  $\omega:[a,b]\rightarrow \mathbb{R}^+$ (see \cite{kwapisz1991bielecki}) and
	\begin{equation}\label{equiv}
	c_1 \|u\|_p \leq \| u\|_{p,\omega}\leq c_2\|u\|_p,
	\end{equation}
	where
	\[
	c_1 = \left(\sup\{\omega(x) : x\in [a,b]\}\right)^{-\frac{1}{p}} \quad \textrm{and} \quad c_2 = \left(\inf\{\omega(x) : x\in [a,b]\}\right)^{-\frac{1}{p}}.
	\]
	
	From \eqref{equiv}, it follows that $L^{p}([a,b])$ equipped with the norm $\| . \|_{p,\omega}$ is a Banach space, since $(L^{p}([a,b]), \|.\|_p)$ is a Banach space.

	We define $\omega: [a,b]\rightarrow \mathbb{R}^+$ by
	\begin{equation}\label{omega}
	\omega(x)=\exp^{{\lambda\int_{a}^{x}[M_1(s)]^p}ds},
	\end{equation}
	where $\lambda>1$ and
	\begin{equation}\label{c1}
	\new{C}^{p}\int_{a}^{b}(M_{1}(t))^{p}dt<\frac{1}{\lambda}.
	\end{equation}
	
	\new{We recall that $C = ML$, with $M$ and $L$ from conditions $(H1)$ and $(H2)$.}
	
	The next result is crucial to guarantee the uniqueness of solution for eq. \eqref{emanu}.

	\begin{theorem}\label{th uniqueness} Assume that conditions { $(A1)$, $(A2)$, $(A3)$, $(H1)$, and $(H2)$} from Theorems \ref{th1} and \ref{exresult} are satisfied.
		Then the operator $$Ay(t)=f\left(t,\int_{a}^{b}k(t,s)g(s,y(s))ds\right)$$ is a contraction in $L^{p}([a,b])$ with respect to the norm $\| . \|_{p,\omega}$, where $\omega$  is defined in \eqref{omega}.
	\end{theorem}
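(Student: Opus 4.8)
The plan is to establish a Lipschitz-type estimate for $A$ directly in the weighted norm $\|\cdot\|_{p,\omega}$, imitating the pointwise estimate already obtained in the proof of Theorem~\ref{exresult}. First I would fix $y_1,y_2\in L^p([a,b])$ and, for a.e.\ $t\in[a,b]$, chain together the available hypotheses: condition $(H1)$ to pull out the Lipschitz constant $M$ of $f$, condition $(H2)$ to replace $|g(s,y_1(s))-g(s,y_2(s))|$ by $L\,|y_1(s)-y_2(s)|$, and then H\"older's inequality together with $(A2)$ to bound the kernel factor by $M_1(t)$. Writing $C=ML$, this yields
\begin{equation*}
|Ay_1(t)-Ay_2(t)|^p\le C^p[M_1(t)]^p\int_a^b|y_1(s)-y_2(s)|^p\,ds,
\end{equation*}
which is exactly inequality \eqref{apsuc} with $y_n,y_{n-1}$ replaced by $y_1,y_2$.

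The second step is to convert this pointwise bound into a statement about $\|\cdot\|_{p,\omega}$. For each $x\in[a,b]$ I would integrate the last inequality over $[a,x]$ and multiply by $\omega^{-1}(x)$. The decisive device is the differential identity satisfied by the weight \eqref{omega}, namely $\omega'(x)=\lambda[M_1(x)]^p\,\omega(x)$, equivalently $[M_1(x)]^p\,\omega(x)=\tfrac1\lambda\,\omega'(x)$; this is what lets the factor $\int_a^x[M_1(t)]^p\,dt$ be expressed through $\omega$ and telescoped against the leading $\omega^{-1}(x)$. Bounding the inner integral $\int_a^b|y_1-y_2|^p\,ds$ by the weighted norm through the definition \eqref{measure}, and then taking the supremum over $x\in[a,b]$, should produce an inequality of the form $\|Ay_1-Ay_2\|_{p,\omega}^p\le\kappa^p\,\|y_1-y_2\|_{p,\omega}^p$ with a constant $\kappa$ assembled from $C$, $\lambda$, and $\int_a^b[M_1]^p\,ds$.

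The final step is to invoke the smallness condition \eqref{c1}, $C^p\int_a^b[M_1(t)]^p\,dt<1/\lambda$, to force $\kappa<1$ and conclude that $A$ is a contraction for $\|\cdot\|_{p,\omega}$; uniqueness of the solution of \eqref{emanu} then follows from the Banach Fixed Point Theorem on the Banach space $(L^p([a,b]),\|\cdot\|_{p,\omega})$. I expect the genuine obstacle to lie precisely in this last quantitative step. Because the inner integral runs over the whole interval $[a,b]$ (the equation is of Fredholm rather than Volterra type), the clean telescoping that ordinarily makes Bielecki-type norms contractive is not automatic, and one must verify carefully that the interplay between the exponential weight $\omega$, the parameter $\lambda>1$, and condition \eqref{c1} really delivers a constant strictly below $1$. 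This bookkeeping, rather than any of the preliminary estimates, is where the argument has to be pinned down.
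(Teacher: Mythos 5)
Your outline reproduces the paper's own proof: the pointwise bound you display is exactly the paper's inequality \eqref{h3}, and the paper then does precisely what you sketch --- integrates over $[a,x]$, inserts $\omega(t)\,\omega^{-1}(t)$, and invokes \eqref{c1}. But your proposal stops short of the ``bookkeeping'' you defer to the end, and that deferred step is where the argument actually breaks; the obstacle you flag is genuine, not a formality. To telescope, the paper bounds $\exp\left(-\lambda\int_a^t[M_1(s)]^p\,ds\right)\int_a^b|y_1(s)-y_2(s)|^p\,ds$ by $\|y_1-y_2\|_{p,\omega}^p$, but definition \eqref{measure} only controls quantities of the form $\omega^{-1}(x)\int_a^x|\cdot|^p\,ds$, i.e.\ the integral up to the point at which the weight is evaluated; here the inner integral runs over all of $[a,b]$ while the weight sits at $t\le b$. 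That inequality fails in general: take $|y_1-y_2|^p$ concentrated near $b$ and $t$ near $a$, so the left side is essentially $\int_a^b|y_1-y_2|^p\,ds$ while $\|y_1-y_2\|_{p,\omega}^p$ is essentially $\omega^{-1}(b)\int_a^b|y_1-y_2|^p\,ds$, smaller by the factor $\omega^{-1}(b)<1$. So neither your sketch nor the paper establishes the weighted-norm contraction: for a Fredholm kernel the Bielecki device does not telescope, exactly as you suspected.

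The way to close the gap is to abandon the weight, which condition \eqref{c1} makes superfluous. Integrating \eqref{h3} over the whole interval gives
\begin{equation*}
\|Ay_1-Ay_2\|_p^p\;\le\; C^p\left(\int_a^b[M_1(t)]^p\,dt\right)\|y_1-y_2\|_p^p\;<\;\frac{1}{\lambda}\,\|y_1-y_2\|_p^p ,
\end{equation*}
so $A$ is a contraction for the \emph{ordinary} $L^p$ norm with constant $\lambda^{-1/p}<1$ --- the same $\alpha$ the paper asserts --- and Theorem \ref{th_ex_uni} then follows from the Banach Fixed Point Theorem in $(L^p([a,b]),\|\cdot\|_p)$. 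The weighted statement itself is recoverable only under an additional hypothesis such as $\lambda\int_a^b[M_1(s)]^p\,ds\le 1$ (then $u\mapsto u\,e^{-\lambda u}$ is increasing on the relevant range and your $\kappa$ is at most $\left(C^p\int_a^b[M_1(s)]^p\,ds\right)^{1/p}<\lambda^{-1/p}$); without it, a rank-one kernel $k(t,s)=M_1(t)\phi(s)$ with $\|\phi\|_q=1$ and $\phi$ concentrated near $b$, combined with suitably truncated linear $f$ and $g$, satisfies all of $(A1)$--$(A3)$, $(H1)$, $(H2)$ and \eqref{c1} yet has weighted Lipschitz ratio exceeding $1$. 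Bielecki norms genuinely help only for Volterra kernels, where the inner integral is $\int_a^t$ and the telescoping you describe goes through.
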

	
	\begin{proof}
		
		Theorem \new{\ref{th1}} ensures that $Ay\in L^{p}([a,b])$. \new{Furthermore, Conditions $(H1)$ and $(H2)$ imply}
		\begin{eqnarray*}
		|Ay_{1}(t)-Ay_{2}(t)|^{p} & = & \left|f\left(t,\int_{a}^{b}k(t,s)g(s,y_{1}(s))ds\right)- 
		f\left(t,\int_{a}^{b}k(t,s)g(s,y_{2}(s))ds\right)\right|^{p}\\
		&\leq &  M^{p}\left|\int_{a}^{b}k(t,s)g(s,y_{1}(s))ds-\int_{a}^{b}k(t,s)g(s,y_{2}(s))ds\right|^p\\
		&\leq & \displaystyle M^{p}\left|\int_{a}^{b}|k(t,s)||g(s, y_{1}(s))-g(s, y_{2}(s))| ds\right|^p\\
    		&\leq &  \new{C}^{p} \left|\int_{a}^{b} | k(t,s)| |y_1(s) - y_{2}(s))| ds\right|^p. 
			\end{eqnarray*}
		\new{Using H\"older's inequality, we have}
		\[
	\new{	\left|\int_{a}^{b} | k(t,s)| |y_1(s) - y_{2}(s))| ds\right|^p \leq \left(\int_{a}^{b} |k(t,s)|^q ds \right)^{p/q}\cdot \left(\int_{a}^{b} |y_1(s) - y_{2}(s))|^p ds \right).}
		\]
		Thus, 
		\[
		\new{\displaystyle|Ay_{1}(t)-Ay_{2}(t)|^{p} \leq \displaystyle \new{C}^{p} \left(\int_{a}^{b} |k(t,s)|^q ds \right)^{p/q}\cdot \left(\int_{a}^{b} |y_1(s) - y_{2}(s))|^p ds \right).}
		\]

		From \eqref{h1}, it follows that
		\begin{equation}\label{h3}
		|Ay_{1}(t)-Ay_{2}(t)|^{p} \leq \displaystyle \new{C}^{p}(M_{1}(t))^{p}\int_{a}^{b}|y_{1}(s)-y_{2}(s)|^{p}ds.
		\end{equation}
		
		Integrating both sides of \eqref{h3} with respect to $t$ over $[a,x]$, $x\in [a,b]$, and using \eqref{c1}, we obtain
		\begin{equation*}
		\begin{split}
		\displaystyle\int_{a}^{x}|Ay_{1}(t)-Ay_{2}(t)|^{p}dt&\leq\displaystyle\int_{a}^{x}\left[(\new{C}^{p}(M_{1}(t))^{p}.\int_{a}^{b}|y_{1}(s)-y_{2}(s)|^{p}ds\right]dt\\
		&=\displaystyle\int_{a}^{x}\left[(\new{C}^{p}(M_{1}(t))^{p}\exp^{\lambda\int_{a}^{t}[M_1(s)]^pds}\cdot\exp^{-\lambda\int_{a}^{t}[M_1(s)]^pds}\right.\\ &\cdot\left.\int_{a}^{b}|y_{1}(s)-y_{2}(s)|^{p}ds\right]dt\\
		&\leq \displaystyle\new{C}^{p}.\| y_{1}-y_{2}\|^{p}_{p,\omega}\cdot\exp^{\lambda\int_{a}^{x}[M_1(s)]^pds}\int_{a}^{x}(M_{1}(t))^{p}dt\\
		&\leq\displaystyle\frac{1}{\lambda}\cdot\exp^{\lambda\int_{a}^{x}[M_1(s)]^pds}\cdot\| y_{1}-y_{2}\|^{p}_{p,\omega},
		\end{split}
		\end{equation*}
		which implies
		$$\exp^{-\lambda\int_{a}^{x}[M_1(s)]^pds}\int_{a}^{b}|Ay_{1}(t)-Ay_{2}(t)|^{p}dt\leq\frac{1}{\lambda}\| y_{1}-y_{2}\|_{p,\omega}^{p}.$$
		
		Therefore,
		$$\| Ay_{1}(t)-Ay_{2}(t)\|_{p,\omega}^{p}\leq\frac{1}{\lambda}\| y_{1}-y_{2}\|_{p,\omega}^{p},$$
		whence we can conclude that
		$$\| Ay_{1}(t)-Ay_{2}(t)\|_{p,\omega}\leq\alpha\| y_{1}-y_{2}\|_{p,\omega},$$ where $\alpha=\lambda^{-1/p}<1$.
	\end{proof}
	
	Our main result will be presented in the next lines.
	Its proof  is an immediate consequence of  \new{ Banach Fixed Point Theorem}, Theorem \ref{th1}, and Theorem \ref{exresult}.

	\begin{theorem}\label{th_ex_uni} Assume that conditions \new{ $(A1)$, $(A2)$, $(A3)$, $(H1)$, and $(H2)$} from Theorems \ref{th1} and \ref{exresult} are satisfied.  Then eq. \eqref{emanu} has a unique solution in $L^{p}([a,b])$, which can be obtained as the
		limit of successive approximations.
	\end{theorem}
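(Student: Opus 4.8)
The plan is to assemble the final statement from the three results already established, with the Banach Fixed Point Theorem as the engine. First I would note that, by Theorem \ref{th1}, the operator $A$ defined in \eqref{operatorA} is a well-defined self-map of $L^p([a,b])$, so iterating $A$ never leaves the space. Next, recalling the norm equivalence \eqref{equiv}, the pair $(L^p([a,b]), \|\cdot\|_{p,\omega})$ is itself a Banach space: it carries the same Cauchy sequences and the same limits as $(L^p([a,b]), \|\cdot\|_p)$, which is complete, so completeness transfers verbatim.

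The central ingredient is Theorem \ref{th uniqueness}, which provides exactly what the Banach Fixed Point Theorem requires, namely that $A$ is a contraction with respect to $\|\cdot\|_{p,\omega}$ with constant $\alpha = \lambda^{-1/p} < 1$. With a complete metric space and a contraction in hand, the Banach Fixed Point Theorem applies directly and yields a unique fixed point $y^{*} \in L^p([a,b])$. Since the fixed-point equation $y^{*} = Ay^{*}$ is precisely the functional-integral equation \eqref{emanu}, this $y^{*}$ is its unique solution in $L^p([a,b])$.

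For the approximation claim I would appeal to the constructive part of the Banach Fixed Point Theorem: the Picard iterates generated by $A$ starting from $y_0 \equiv 0$ converge to $y^{*}$ in the norm $\|\cdot\|_{p,\omega}$, and by the equivalence \eqref{equiv} this convergence also holds in the standard $L^p$-norm. These iterates are exactly the successive approximation scheme \eqref{recurrence_relation} of Theorem \ref{exresult}, so the solution is recovered as the limit of that sequence, closing the loop with the convergence already observed there.

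Since the substantive work, namely verifying the contraction estimate, was carried out in Theorem \ref{th uniqueness}, this final step is essentially a matter of bookkeeping and I do not expect a genuine obstacle. The one point deserving care is that contractivity and uniqueness are proved in the weighted norm $\|\cdot\|_{p,\omega}$, whereas the statement concerns $L^p([a,b])$ as such; this causes no difficulty, because uniqueness of a fixed point is a property of the map $A$ on the underlying set and is independent of which of the two equivalent norms is used to detect it, while \eqref{equiv} ensures the convergence statement is norm-independent as well.
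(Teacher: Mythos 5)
Your proposal is correct and takes essentially the same route as the paper: the paper's one-line proof likewise assembles Theorem \ref{th1} (the operator $A$ is a self-map of $L^p([a,b])$), the contraction property from Theorem \ref{th uniqueness} on the Banach space $\left(L^p([a,b]), \|\cdot\|_{p,\omega}\right)$, the norm equivalence \eqref{equiv} to transfer completeness and convergence, and the Banach Fixed Point Theorem. Your write-up is in fact more explicit than the paper's, which cites Theorems \ref{th1} and \ref{exresult} but leaves the appeal to the contraction result of Theorem \ref{th uniqueness} implicit.
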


	\section{Error Analysis}\label{section_error}
	Consider integral equation \eqref{emanu}, where $k$, $g$, $f$, and $y$ satisfy the hypotheses from Theorem \ref{th_ex_uni}. In order to obtain the successive approximation for the exact solution, we use the recurrence relation given by: 
			\begin{equation}\label{sequenciaLp}
				\left\{ 
				\begin{aligned}
					&y_0 = 0, \\ 
					&y_{n+1} = Ay_n, \quad n = 0, 1, 2, 3, \dots,  
				\end{aligned}
				\right. 
			\end{equation}
where $A$ is the operator defined by eq. \eqref{operatorA}.			
	\begin{remark}
		Although the above equation uses quadrature rules, we suppose to choose the number of integration points in such a way that the quadrature rule will not interfere with the successive approximation error, i.e, we assume the exact integration.
	\end{remark}
	From Theorem \ref{exresult}, we have that the sequence	 $(y_n)$ 
	  converges to the exact solution since \eqref{Np_condicao} holds.
     The following theorem establishes an estimative of the error generated by the successive approximation method of this sequence.
\begin{theorem}\label{Teorema_sequencia}
Assume that the conditions from Theorem \ref{th_ex_uni} are satisfied. Then the sequence $(y_n)$ generated by the successive approximation method \eqref{sequenciaLp} satisfies the following inequality:	
\begin{equation}\label{EqThm3}
\|y^\ast - y_n\|_p \leq \dfrac{N^n}{1-N}\|y_{1}\|_p,
\end{equation}	
where $y^\ast$ is the exact solution of \eqref{emanu}.
\end{theorem}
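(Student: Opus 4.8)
The plan is to recognise \eqref{EqThm3} as the standard a priori error bound for the Banach fixed-point iteration and to extract the contraction constant $N$ directly from estimates already obtained. First I would establish that the operator $A$ is a contraction on $(L^{p}([a,b]),\|\cdot\|_p)$ with constant $N$. Inequality \eqref{h3} from the proof of Theorem \ref{th uniqueness} gives, for a.e.\ $t\in[a,b]$,
\[
|Ay_{1}(t)-Ay_{2}(t)|^{p} \leq C^{p}(M_{1}(t))^{p}\int_{a}^{b}|y_{1}(s)-y_{2}(s)|^{p}\,ds,
\]
and integrating this over the whole interval $[a,b]$ (rather than over $[a,x]$ as in the weighted-norm argument), the inner integral being constant in $t$, yields, with the definition $N^{p}=\int_{a}^{b}C^{p}(M_{1}(s))^{p}\,ds$ from \eqref{Np_condicao},
\[
\|Ay_{1}-Ay_{2}\|_{p}^{p} \leq N^{p}\,\|y_{1}-y_{2}\|_{p}^{p},
\]
so that $\|Ay_{1}-Ay_{2}\|_{p} \leq N\|y_{1}-y_{2}\|_{p}$ with $N<1$. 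This replaces the weighted Bielecki norm of Theorem \ref{th uniqueness} by the ordinary $L^{p}$ norm, which is precisely the norm appearing in \eqref{EqThm3}.

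Next I would control the consecutive differences of the iterates. Since $y_{0}\equiv 0$ and $y_{n+1}=Ay_{n}$ by \eqref{sequenciaLp}, a one-line induction using the contraction property gives
\[
\|y_{n+1}-y_{n}\|_{p} = \|Ay_{n}-Ay_{n-1}\|_{p} \leq N\|y_{n}-y_{n-1}\|_{p} \leq \cdots \leq N^{n}\|y_{1}-y_{0}\|_{p} = N^{n}\|y_{1}\|_{p},
\]
for every $n\geq 0$. Summing these bounds geometrically, for any $m>n$ the triangle inequality gives
\[
\|y_{m}-y_{n}\|_{p} \leq \sum_{k=n}^{m-1}\|y_{k+1}-y_{k}\|_{p} \leq \|y_{1}\|_{p}\sum_{k=n}^{m-1}N^{k} \leq \frac{N^{n}}{1-N}\|y_{1}\|_{p},
\]
where the last bound is uniform in $m$ because $N<1$.

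Finally, Theorem \ref{exresult} (equivalently Theorem \ref{th_ex_uni}) guarantees that $(y_{n})$ converges in $L^{p}([a,b])$ to the exact solution $y^{\ast}$ of \eqref{emanu}. Letting $m\to\infty$ in the displayed bound and using continuity of the norm gives $\|y^{\ast}-y_{n}\|_{p}\leq \frac{N^{n}}{1-N}\|y_{1}\|_{p}$, which is \eqref{EqThm3}. The only point that needs care --- and the main (mild) obstacle --- is the first step: one must verify that the contraction holds in the \emph{unweighted} $L^{p}$ norm, which is legitimate here precisely because \eqref{Np_condicao} forces $N<1$; without that hypothesis one would only have the weighted-norm contraction of Theorem \ref{th uniqueness}, whose constant $\alpha=\lambda^{-1/p}$ would not directly produce the clean geometric factor $N^{n}$ in the plain $L^p$ norm.
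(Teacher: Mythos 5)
Your proof is correct, and its skeleton coincides with the paper's: bound the consecutive differences by $N^{n}\|y_{1}\|_{p}$, telescope with the triangle inequality, sum the geometric series, and pass to the limit. The one genuine difference is where the key bound $\|y_{n+1}-y_{n}\|_{p}\leq N^{n}\|y_{1}\|_{p}$ comes from. The paper re-uses the pointwise iterated estimate from the proof of Theorem \ref{exresult}, namely $|y_{n+1}(t)-y_{n}(t)|^{p}\leq C^{p}[M_{1}(t)]^{p}\|y_{1}\|_{p}^{p}N^{(n-1)p}$, and integrates it in $t$; you instead integrate \eqref{h3} over all of $[a,b]$ to exhibit $A$ as a contraction with constant $N$ in the \emph{unweighted} $L^{p}$ norm, and then run a norm-level induction. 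Your packaging buys two things the paper leaves implicit: first, it shows that under \eqref{Np_condicao} the Banach fixed point theorem applies directly in $(L^{p}([a,b]),\|\cdot\|_{p})$, so existence, uniqueness, and norm convergence of the iterates all follow without the Bielecki weighted norm of Theorem \ref{th uniqueness} (whose condition \eqref{c1} in fact already forces $N<1$); second, it legitimizes the final passage to the limit, since Theorem \ref{exresult} as stated only asserts almost-everywhere convergence, and letting $k\to\infty$ inside the $L^{p}$ norm, as the paper does, strictly speaking requires either norm convergence (which your contraction argument supplies) or an appeal to Fatou's lemma. The paper's route is marginally shorter because the pointwise estimates were already on hand; yours is self-contained and more transparent about which norm the contraction lives in.
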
	
\begin{proof}
Following the same steps of Theorem \ref{exresult} we have 
\begin{equation*}
|y_{n+1}(t) - y_{n}(t)|^p \leq   C^p[M_1(t)]^p \|y_{1}\|_p^p N^{(n-1)p},
\end{equation*} 
so that
\begin{equation*}
\|y_{n+1} - y_{n}\|_p \leq N^{n} \|y_{1}\|_p^p.
\end{equation*} 
For $k>n$, we have
\begin{equation*}
\begin{split}
\|y_{k} - y_{n}\|_p &\leq \|y_{k} - y_{k-1}\|_p + \ldots + \|y_{n+1} - y_{n}\|_p\\
&\leq N^{k} \|y_{1}\|_p + \ldots + N^{n} \|y_{1}\|_p\\
&= (N^{k} + \ldots + N^{n}) \|y_{1}\|_p\\
&= \frac{N^n-N^k}{1-N} \|y_{1}\|_p.
\end{split}
\end{equation*} 
Making $k\rightarrow \infty$, we arrive at the desired result.

\end{proof}

	\section{Numerical Examples}\label{numericalexamples}
	In this section we describe some of the numerical experiments performed
	in solving the functional integral eq. \eqref{emanu},  which can be treated by our  Theorem \ref{th_ex_uni} 
	to illustrate the results of  existence  and uniqueness. 
	\new{For the numerical application, we use  Picard  iterative process (see Appendix A) and  admit that the convergence  is achieved when the stopping criterion  has tolerance $tol = 1e-12$ on $L^2$-norm}.
	%
	%
	%
	 \new{We employ the} MATLAB package Chebpack  available at the Mathworks
	website 
	\url{https://www.mathworks.com/matlabcentral/fileexchange/32227-chebpack} as a stand-alone algorithm for solving nonlinear systems and  investigating the performance of the numerical solution.
	
\section*{Example 1} Consider the nonlinear functional integral equation:
	\begin{equation}\label{exemplo1}
	y(t) =  \sin\left(\int_{0}^{1}(t-x)(y(x))\,dx + (t -1)\cos(1) + \sin(1)\right), \quad t\in [0,1],
	\end{equation}
	with exact solution $y(t) = sin(t)$.  Take $k(t,x) = t-x$ and $f(t,z)= \sin((t -1)\cos(1) + \sin(1) + z)$.
	\new{It is easily verified  in Theorem \ref{th_ex_uni}   that the hypotheses  are valid}. In this way, we have the guarantee of existence and uniqueness of the solution.

	 \new{To establish the minimum number of integration points  in terms of absolute errors, we note that, from 10 points of integration, we get the same convergence point with more or less iterations (see Fig. \ref{IntNumberEx1}). It allowed us to conclude that 10 points of integration are sufficient to preserve the convergence of the method. In the next experiment}, we take $10$ integration points and numerical solution {putting $n = 1,\ 2,\ 3,\ 5$, and  $10$ iterations}   on the successive approximation method.  The solutions   are compared with the exact solution $y(t) = \sin(t)$ as described graphically in Fig. \ref{solEx1}.   Already, Fig. \ref{ErroEx1}  depicts the decay of the error on $L^2$-norm  of the approximate solution considering a variation in the  iterations number $n$, from  $1$ to $20$, while in Table \ref{tabEx1} we present some values associated with these iterations. \new{The results confirm the  accuracy of the successive approximation method.}
	 
	\begin{figure}[htbp]
		\centering 
		\subfigure[]{\label{IntNumberEx1}\includegraphics[scale=.28]{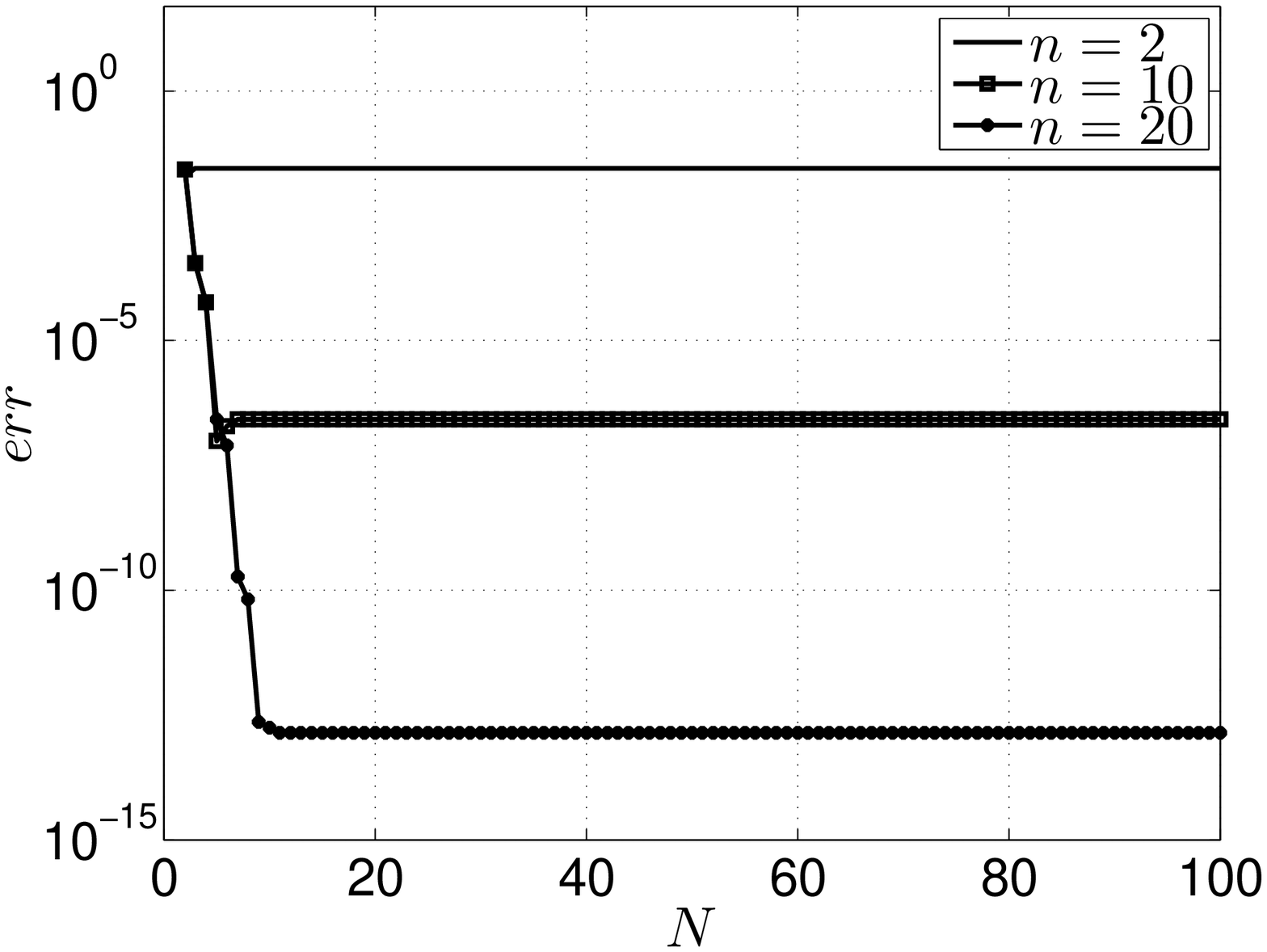}}
		\subfigure[]{\label{solEx1}\includegraphics[scale=.28]{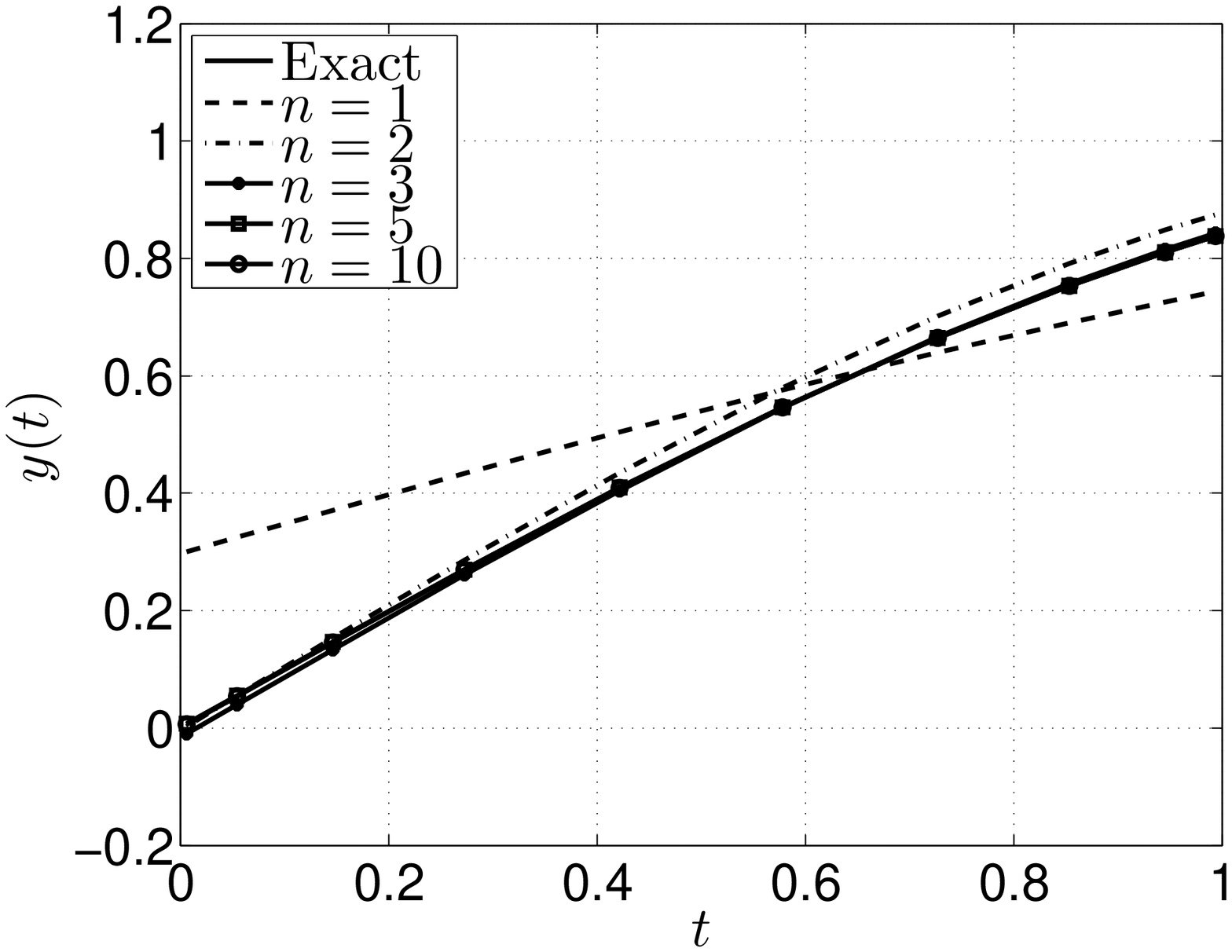}}\\ 
		\subfigure[]{\label{ErroEx1}\includegraphics[scale=.28]{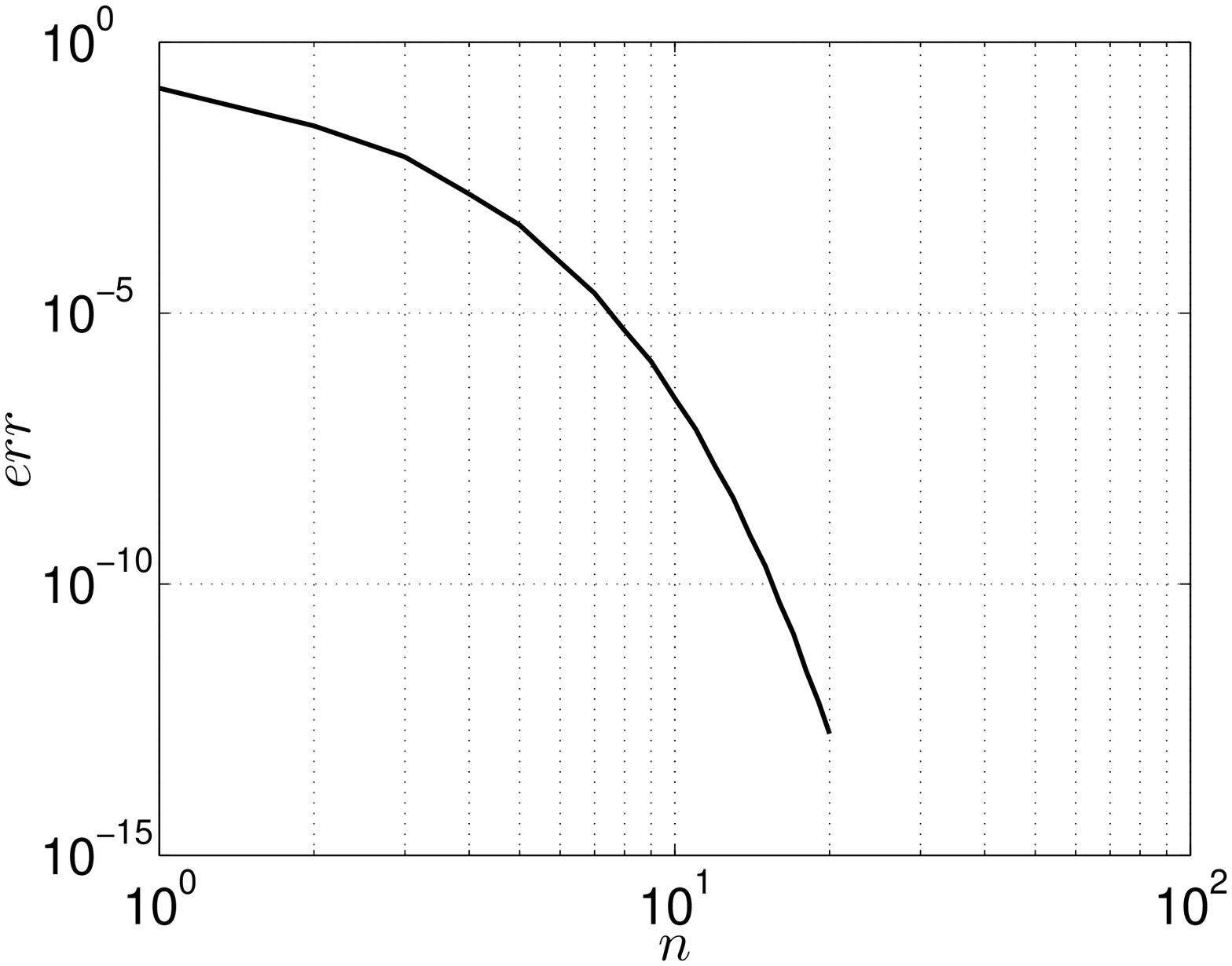}}
	\caption{(a) Absolute error (on $L^2$-norm) of the numerical solution of eq. \eqref{exemplo2} in relation to number of integration points  $N$ putting the  iterations $n = 2, 10$, $20$, and using semi-log  scale; (b) comparison of numerical solution, and exact solution, in $10$ integration points; (c) absolute error (on $L^2$-norm) of the numerical solution  of eq. \eqref{exemplo2} on iterations number $n$ from $1$ to $20$ using log-log scale.}\label{funct_Ex1}
	\end{figure}

\begin{table}[htbp] 
	\begin{center}
		\begin{tabular}{rrl}
			\hline
			Iteration (n) & Error in the $L^2$-norm (err)\\
			\hline 
			1 & 0.139055224218022&\\
			2 & 0.28090214152532&e-01\\
			3 & 0.7514001013338&e-02\\
			4 & 0.1557774788458&e-02\\
			5 & 0.417391135550&e-03\\
			6 & 0.86414955296&e-04\\
			10 & 0.265922221&e-06\\
			12 & 0.14751583&e-07\\
			15 & 0.219260&e-09\\
			20 & 0.174 &e-12\\
			\hline 
		\end{tabular}
	\end{center}
	\caption{Error in the $L^2$-norm of the  approximate solution  with respect to eq. \eqref{exemplo1} for the iterations $n=1,2,\ldots,20$.  }\label{tabEx1}
\end{table}


\newpage

\section*{Example 2} Consider the nonlinear functional integral equation:
\begin{equation}\label{exemplo2}
y(t) = \frac{1}{t+1} \log\left(\int_{0}^{1}(tx)\arctan(y(x))\,dx -\frac{t}{3} + \exp(-t-1)\right) + \tan(t) + 1,
\end{equation}
with $t\in [0,1]$ and  exact solution $y(t) = \tan(t)$.  Consider $k(t,x) = tx$ and $$f(t,z)= \frac{1}{t+1} \log\left(z-\frac{t}{3} + \exp(-t-1)\right) + \tan(t) + 1.$$
In this example, \new{the hypotheses from Theorem \ref{th_ex_uni} are also easily checked.}


Similar to the previous experiment, in Figs. \ref{funct_Ex2} we plot the approximate solutions of eq. \eqref{exemplo2} and the error associated with $L^2$-norm. The numerical solution has a good agreement with the exact solution. In Table \ref{tabEx2} we exhibit again some numerical results of this  error on $L^2$-norm.

\begin{figure}[htbp]
	\centering 
	\subfigure[]{\label{IntNumberEx2}\includegraphics[scale=.28]{fig_intN_solution_funct_Ex1.eps}}
	\subfigure[]{\label{solEx2}\includegraphics[scale=.28]{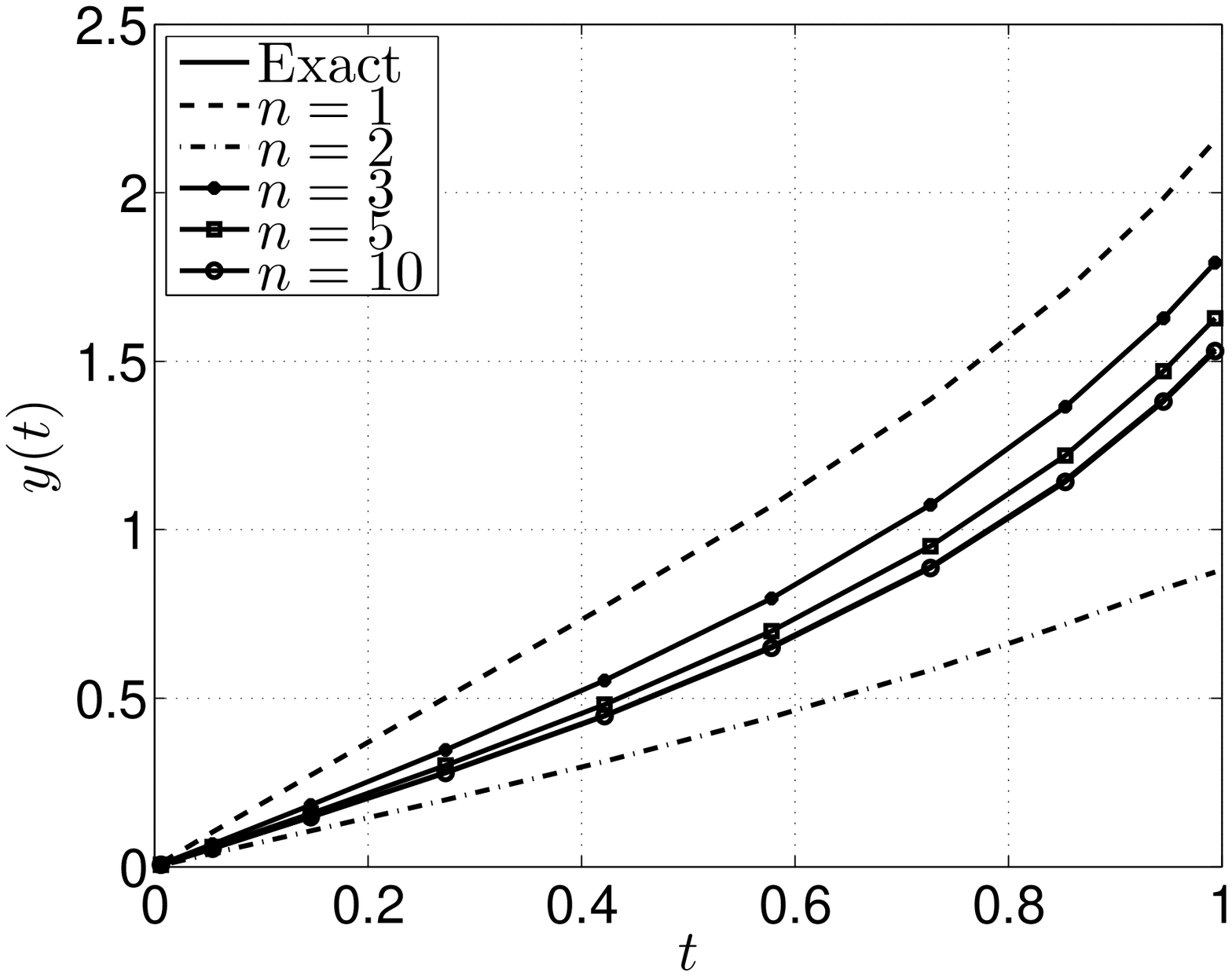}} 
	\subfigure[]{\label{ErroEx2}\includegraphics[scale=.28]{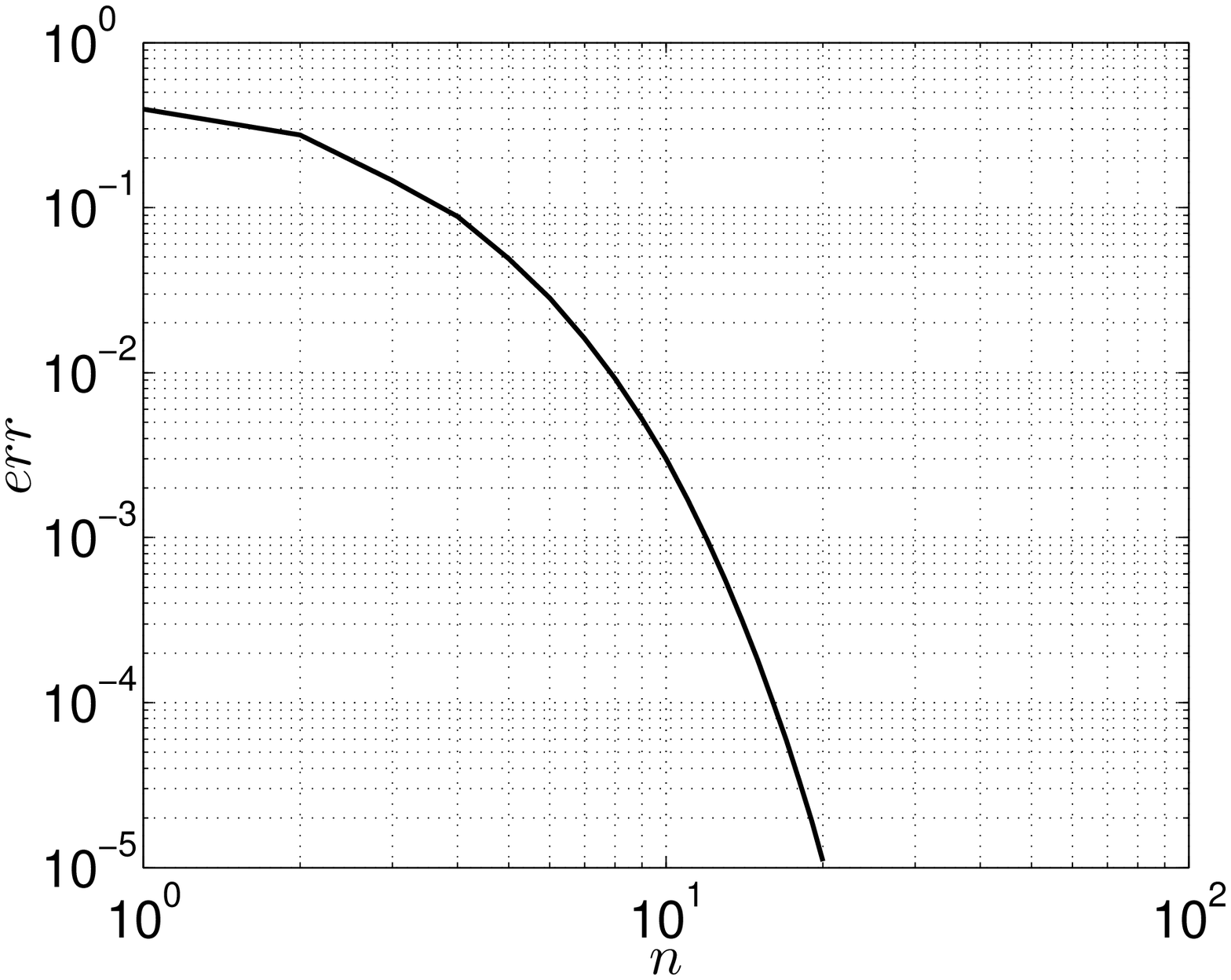}}
\caption{(a) Absolute error (on $L^2$-norm) of the numerical solution of eq. \eqref{exemplo2} in relation to number of integration points  $N$ putting the  iterations $n = 2, 10$, $20$, and using semi-log  scale; (b) comparison of numerical solution, and exact solution, in $10$ integration points; (c) absolute error (on $L^2$-norm) of the numerical solution  of eq.\eqref{exemplo2} on iterations number $n$ from $1$ to $20$ using log-log scale.}\label{funct_Ex2}
\end{figure}

\begin{table}[htbp] 
	\begin{center}
		\begin{tabular}{rrl}
			\hline
			Iteration (n) & Error in the $L^2$-norm (err)\\
			\hline 
1&  0.394366600397979\\
2&  0.276308698116164\\
3&  0.146122318109685\\
4&  0.881455213962e-1\\
5&  0.489253352732e-1\\
6&  0.283969210188e-1\\
10& 0.2994640228e-2\\
12& 0.97457513e-3\\
15& 0.18100617e-3\\
20& 0.109484e-4\\
			\hline 
		\end{tabular}
	\end{center}
	\caption{Error in the $L^2$-norm of the  approximate solution  with respect to eq. \eqref{exemplo2} for the iterations $n=1,2,\ldots,20$. }\label{tabEx2}
\end{table} 


\newpage

\section{Conclusion}\label{conclusion}

In this paper we expand the results of Emannuele \cite{Ema:92}  in the space  $L^p([a,b])$ for nonlinear integral equations through of  Theorem \ref{th_ex_uni}. We illustrate the guarantee of existence and uniqueness on the method of successive approximations considering  Chebyshev quadrature.  The computed errors for the exact solution and the iterated solution  present acceptable results.

\section*{Acknowledgements}

\noindent 
This work was supported by \emph{CNPq} (grants 441489/2014-1).


\section*{References}

\section*{Appendix A\\Chebyshev polynomial method}
	In this section we apply  Chebyshev polynomial method (see \cite{PIESSENS1976})  for solving one dimensional functional-integral equation \eqref{emanu}. We  employ Chebyshev polynomial method of  the first kind to approximate functions  on the interval $[a, b]$. Firstly, we start with some basic definitions.
	\begin{definition}\label{Def_polichebychev}
	Chebyshev polynomials of degree $n$ are defined as:
		\begin{equation}
		{\displaystyle T_{n}(x)={\begin{cases}\cos {\big (}n\arccos x{\big )},&{\text{if }}|x|\leq 1,\\\cosh {\big (}n\operatorname {arcosh} x{\big )},&{\text{if }}x\geq 1,\\(-1)^{n}\cosh {\big (}n\operatorname {arcosh} (-x){\big )},&{\text{if }}x\leq -1.\end{cases}}} 
		\end{equation}
		In addition, these polynomials satisfy the following relations:
		$$\displaystyle T_{n}(\cos \theta )=\cos n\theta, \quad n = 0, 1, 2,\ldots $$
		and
		$${\displaystyle \int _{-1}^{1}\,{\frac {T_{n}(x)T_{m}(x)dx}{\sqrt {1-x^{2}}}}={\begin{cases}0,&n\neq m,\\\pi, &n=m=0,\\{\dfrac {\pi }{2},}&n=m\neq 0.\end{cases}}}$$
	\end{definition}
	
	\begin{remark} The set of Chebyshev polynomials form an orthogonal basis in $L^2([a,b])$, so that a function $f\in L^2([a,b])$  can be approximated via expansion as follows:
		\begin{equation}\label{approx_chebychev}
		\displaystyle f(x)\approx\sum_{n=0}^{M}a_{n}T_{n}\left(\frac{2}{b-a}x- \frac{b+a}{b-a} \right), \quad x \in [a,b],
		\end{equation}
		such that 
		\begin{equation}\label{coef_chebychev}
		a_n =  \displaystyle \frac{2}{\pi  d_k}\int _{-1}^{+1}{\frac {T_{n}(x)f(\frac{b-a}{2}x- \frac{b+a}{2})}{\sqrt {1-x^{2}}}}\,dx,\quad d_k ={\begin{cases}2,&n= 0,\\  1, &n>1.\end{cases}} 
		\end{equation}
	\end{remark}
	We estimate the unknown function $y(t)$  with the Chebyshev polynomials as
	\begin{equation}\label{approx_chebychev2}
	y(t)\approx y_M(t) = \sum_{n=0}^{M}c_{n}T_{n}(t).
	\end{equation}
	The unknown coefficients $c_{n}$  are determined by selecting  collocation points 
	$\{t_i\}_{i=0}^M$, where
	\begin{eqnarray}
	t_i = \frac{b-a}{2}x_i- \frac{b+a}{2},\quad x_i=  cos\left(\frac{i\pi}{M}\right).
	\end{eqnarray}
	The collocation method solves  the nonlinear integral equation \eqref{emanu} using approximation \eqref{approx_chebychev2}  through the equations:
	\begin{equation}\label{colocacaosistema}
	y_M(t_i) = f\left(t_i, \int_{a}^{b} k(t_i,s)g(s,y_M(s))ds\right) \qquad 0\leq i\leq M.
	\end{equation}
	%
	Now, by substituting the expression \eqref{coef_chebychev}   into \eqref{colocacaosistema}, we get the following  system: 
	\begin{equation}\label{colocacaosistema1}
	\sum_{n=0}^{M}c_{n}T_{n}(t_i) = f\left(t_i, \int_{a}^{b} k(t_i,s)g\left(s,\sum_{n=0}^{M}c_{n}T_{n}(s)\right)ds\right), \qquad 0\leq i\leq M,
	\end{equation}
	which in matrix form can be written in terms of the  vector $\B{c} =  [c_0, c_1,\ldots, c_M]^T$ as
	\begin{equation} \label{picard1}
	\B{T}\B{c} =  \B{F}(\B{c}),
	\end{equation}
	with  
	\begin{equation*}
	\B{F}(\B{c})=\left[F_0(\B{c}),\ldots,F_{n}(\B{c})\right]^T,
	\end{equation*}
	such that $$F_j(\B{c}) = \int_{a}^bk\left(t_j,s\right)g\left(s,\sum_{n=0}^{M}c_{n}T_{n}(s)\right)\,dx, \quad  0\leq j\leq M,$$
	and
	\begin{equation}\label{MatrixT}
	T_{i,j} = T_j(t_i),\quad  0\leq i,j\leq M. 
	\end{equation}
	This iterative process can be solved using Picard iteration method due to its nonlinearity. In this way, for each iteration we solve a linear problem: 
	\begin{equation} \label{iteracao1}
	\B{c}^{(k+1)}  =  \B{T}^{-1}\B{F}(\B{c}^{(k)}), \quad 0\leq k<k_{max},
	\end{equation}
	with $\B{c}^{(k)} = [c_0^{(k)}, c_1^{(k)},\ldots, c_M^{(k)}]^T$.
	The iterative process is stopped until the following stopping criterion is satisfied:
	\begin{equation} \label{iteracao3}
	 \|\B{c}^{(k+1)}-\B{c}^{(k)}\|_{p} < tol.
	\end{equation} 

\end{document}